\title{Fourier--Mukai transforms commuting with Frobenius}
\author{Daniel Bragg}
\address{Department of Mathematics, University of Utah, Salt Lake City, UT 84112}
\email{bragg@math.utah.edu}
\begin{document}


\begin{abstract}
    We show that a Fourier--Mukai equivalence between smooth projective varieties of characteristic $p$ which commutes with either pushforward or pullback along Frobenius is a composition of shifts, isomorphisms, and tensor products with invertible sheaves whose $(p-1)$th tensor power is trivial.
\end{abstract}

\maketitle

\section{Introduction}

Let $\X$ be a smooth projective variety over an algebraically closed field $k$ of positive characteristic $p$. We write $\D^{\mathrm{b}}(\X)$ for the bounded derived category of coherent sheaves on $\X$, viewed as a $k$-linear triangulated category. The \emph{absolute Frobenius morphism} of $\X$ is the morphism $\F_\X\colon \X\to \X$ induced by the $p$th power map $\mathscr{O}_\X\to\mathscr{O}_\X$, given on local sections by $s\mapsto s^p$. We consider the endofunctors
\[
    \F_{\X*},\F_\X^*\colon\D^{\mathrm{b}}(\X)\to\D^{\mathrm{b}}(\X).
\]
Here, as in the rest of this note, we denote derived functors by the same symbols as their underived counterparts. As $\X$ is smooth, the morphism $\F_\X$ is finite and flat \cite[3.2]{MR1409820}, so both pushforward and pullback along $\F_\X$ are exact. 
We remark that the functors $\F_{\X*}$ and $\F_\X^*$ are not $k$-linear; rather, for objects $\E,\G\in\D^{\mathrm{b}}(\X)$, the map
\[
    \F_{\X*}:\Hom_{\D^{\mathrm{b}}(\X)}(\E,\G)\to\Hom_{\D^{\mathrm{b}}(\X)}(\F_{\X*}\E,\F_{\X*}\G)
\]
satisfies $\F_{\X*}(\lambda\varphi)=\lambda^{1/p}\F_{\X*}(\varphi)$ for $\lambda\in k$, while the map
\[
    \F_\X^*:\Hom_{\D^{\mathrm{b}}(\X)}(\E,\G)\to\Hom_{\D^{\mathrm{b}}(\X)}(\F_{\X}^*\E,\F_{\X}^*\G)
\]
satisfies $\F_{\X}^*(\lambda\varphi)=\lambda^p\F_\X^*(\varphi)$.

Let $\Y$ be another smooth projective variety over $k$ and let 
\[
    \Phi\colon\D^{\mathrm{b}}(\X)\to\D^{\mathrm{b}}(\Y)
\]
be a $k$-linear Fourier--Mukai equivalence. We say that $\Phi$ \emph{commutes with }$\F_*$ (resp. $\Phi$ \emph{commutes with }$\F^*$) if the diagram
\[
  \begin{tikzcd}
    \D^{\mathrm{b}}(\X)\arrow{r}{\Phi}\arrow{d}[swap]{\F_{\X*}}&\D^{\mathrm{b}}(\Y)\arrow{d}{\F_{\Y*}}\\
    \D^{\mathrm{b}}(\X)\arrow{r}{\Phi}&\D^{\mathrm{b}}(\Y)
  \end{tikzcd}
  \hspace{1cm}
  \mbox{resp.}
  \hspace{1cm}
    \begin{tikzcd}
    \D^{\mathrm{b}}(\X)\arrow{r}{\Phi}\arrow{d}[swap]{\F_{\X}^*}&\D^{\mathrm{b}}(\Y)\arrow{d}{\F_{\Y}^*}\\
    \D^{\mathrm{b}}(\X)\arrow{r}{\Phi}&\D^{\mathrm{b}}(\Y)
  \end{tikzcd}
\]
commutes up to a natural isomorphism. Here are some examples of equivalences which commute with both $\F_*$ and $\F^*$:
\begin{enumerate}
    \item the shift functor $[n]\colon\D^{\mathrm{b}}(\X)\to\D^{\mathrm{b}}(\X)$ for any $n\in\mathbf{Z}$,
    \item $f_*\colon\D^{\mathrm{b}}(\X)\to\D^{\mathrm{b}}(\Y)$, where $f\colon \X\isom\Y$ is an isomorphism over $k$, and
    \item $\L\otimes\rule{.25cm}{0.4pt}\colon\D^{\mathrm{b}}(\X)\to\D^{\mathrm{b}}(\X)$, where $\L$ is an invertible sheaf on $\X$ such that $\L^{\otimes p-1}\simeq\mathscr{O}_\X$.
\end{enumerate}
Indeed, the shift functor commutes with any map of triangulated categories (by definition), and the absolute Frobenius has the property that $\F_\Y\circ f=f\circ\F_\X$ for any map of schemes $f\colon\X\to\Y$. To verify the final example, we note that if $\L$ is an invertible sheaf on $\X$ then $\F_\X^*\L\simeq \L^{\otimes p}$. Therefore if $\L^{\otimes p-1}\simeq\mathscr{O}_\X$ we then have $\F_\X^*\L\simeq \L$, and so for any object $\E\in\D^{\mathrm{b}}(\X)$ we have isomorphisms
\[
    \F_\X^*(\L\otimes \E)\simeq\F_\X^*\L\otimes\F_\X^*\E\simeq\L\otimes\F_\X^*\E
\]
and
\[
    \F_{\X*}(\L\otimes \E)\simeq\F_{\X*}(\F_\X^*\L\otimes \E)\simeq \L\otimes\F_{\X*}\E
\]
which are functorial in $\E$.

In this note we will show that these are in fact the only examples.

\begin{theorem}\label{thm:main theorem}
  If $\Phi\colon\D^{\mathrm{b}}(\X)\to\D^{\mathrm{b}}(\Y)$ is a Fourier--Mukai equivalence which commutes with $\F_{*}$ or with $\F^*$, then $\Phi$ is a composition of functors of the above form. More precisely, there exists an isomorphism $f\colon \X\isom \Y$ over $k$, an invertible sheaf $\L$ on $\X$ such that $\L^{\otimes p-1}\simeq\mathscr{O}_\X$, and an integer $n$ such that
  \[
    \Phi(\rule{.25cm}{0.4pt})\simeq f_*(\L\otimes\rule{.25cm}{0.4pt})[n].
  \]
\end{theorem}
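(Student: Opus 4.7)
The plan is to first show that $\Phi$ sends every skyscraper sheaf $k(x)$ to a shifted skyscraper $k(y)[n]$, then invoke a standard characterization (see e.g.\ Huybrechts, \emph{Fourier--Mukai Transforms in Algebraic Geometry}, Cor.\ 5.23) to obtain $\Phi \simeq f_*(\mathcal{L} \otimes -)[n]$, and finally use the Frobenius commutation once more to constrain $\mathcal{L}$. The first step is the main obstacle. Note that the two hypotheses on Frobenius are equivalent: taking left adjoints of the natural isomorphism $\Phi \circ F_{X*} \simeq F_{Y*} \circ \Phi$ yields $F_X^* \circ \Phi^{-1} \simeq \Phi^{-1} \circ F_Y^*$, which rearranges to $\Phi \circ F_X^* \simeq F_Y^* \circ \Phi$; so I assume $\Phi$ commutes with $F_*$.

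Fix a closed point $x \in X$ and set $E = \Phi(k(x))$. Since $F_X$ is the identity on underlying topological spaces, $F_{X*} k(x) \simeq k(x)$, so the commutation gives $F_{Y*}^n E \simeq E$ for every $n \geq 0$. A generic-point calculation shows that $F_{Y*}$ multiplies the generic rank of a coherent sheaf along any positive-dimensional component of its support by a positive power of $p$; hence each cohomology sheaf of $E$ has $0$-dimensional support, and $E \simeq \bigoplus_y E_y$ decomposes by support. Represent each $E_y$ by a bounded complex $C^\bullet$ of finite-length $\mathcal{O}_{Y,y}$-modules, where each term $C^i$ is annihilated by some $\mathfrak{m}_y^{N_i}$. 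For $n$ large enough that $p^n \geq \max_i N_i$, every element $r \in \mathfrak{m}_y$ acts on the terms of $F_{Y*}^n C^\bullet$ via its $p^n$-th power $r^{p^n}$, which lies in $\mathfrak{m}_y^{N_i}$ and hence annihilates $C^i$. Thus $F_{Y*}^n C^\bullet$ is a complex of sheaves annihilated by $\mathfrak{m}_y$, and so splits in $\mathrm{D}^{\mathrm{b}}(Y)$ as the direct sum of shifts of its cohomology, by the semisimplicity of the derived category of $k(y)$-vector spaces; combined with $F_{Y*}^n E_y \simeq E_y$, this yields $E_y \simeq \bigoplus_i k(y)^{\oplus r_{y,i}}[-i]$. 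Since $\Phi$ is an equivalence, $\sum_j \dim \mathrm{Hom}_Y(E, E[j])$ equals $\sum_j \dim \mathrm{Hom}_X(k(x), k(x)[j]) = 2^d$ where $d = \dim X$; a direct computation gives the former as $2^d \sum_y (\sum_i r_{y,i})^2$, so exactly one $r_{y,i}$ equals $1$ and the rest vanish, whence $E \simeq k(y)[n]$ for some $y \in Y$ and $n \in \mathbf{Z}$.

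With $\Phi(k(x)) \simeq k(y(x))[n(x)]$ for every closed $x$, the cited theorem provides an isomorphism $f\colon X \to Y$, a line bundle $\mathcal{L}$ on $X$, and an integer $n$ such that $\Phi(-) \simeq f_*(\mathcal{L} \otimes -)[n]$. Substituting into $\Phi \circ F_{X*} \simeq F_{Y*} \circ \Phi$ and using $F_Y \circ f = f \circ F_X$, the projection formula, and $F_X^* \mathcal{L} \simeq \mathcal{L}^{\otimes p}$, one obtains a natural isomorphism $F_{X*}(\mathcal{L}^{\otimes p} \otimes -) \simeq F_{X*}(\mathcal{L} \otimes -)$ of endofunctors of $\mathrm{D}^{\mathrm{b}}(X)$. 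Passing to Fourier--Mukai kernels on $X \times X$---both supported on the graph of $F_X$, which is a closed subscheme---gives $\mathcal{L}^{\otimes p} \simeq \mathcal{L}$, i.e.\ $\mathcal{L}^{\otimes p-1} \simeq \mathcal{O}_X$.
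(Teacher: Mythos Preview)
Your proof is correct and follows the same overall architecture as the paper: reduce to the $\F_*$ case, show that Frobenius-invariance forces $\Phi(k(x))$ to have zero-dimensional support via a generic-rank count, conclude it is a shifted skyscraper, invoke Huybrechts' Corollary~5.23, and finally pin down $\mathcal{L}$. The differences lie in two sub-steps. For the passage from ``zero-dimensional support'' to ``shifted skyscraper,'' the paper simply observes that $\Phi(k(x))$ is point-like (an equivalence preserves the conditions $\Hom(E,E[i])=0$ for $i<0$ and $\End(E)\simeq k$) and cites Huybrechts' Lemma~4.5; you instead exploit Frobenius a second time, pushing forward enough times to force a representing complex to have terms annihilated by $\mathfrak m_y$, whence it splits, and then you run a total-Ext count. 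Your argument is more self-contained but leans on the (standard, but nontrivial) fact that an object with finite-length cohomology can be represented by a bounded complex of finite-length modules, and implicitly on $\dim\X=\dim\Y$. For the final step, the paper notes that $\mathcal{L}\otimes\rule{.25cm}{0.4pt}$ must also commute with $\F^*$, and evaluating at $\mathcal{O}_\X$ gives $\mathcal{L}\simeq\mathcal{L}^{\otimes p}$ immediately; your route through uniqueness of Fourier--Mukai kernels is valid but invokes a heavier result than necessary, since the functors involved are not equivalences.
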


The key step is Proposition \ref{prop:picking out points}, which characterizes the shifts of structure sheaves of closed points of $\X$ among all objects of $\D^{\mathrm{b}}(\X)$ in terms of the $k$-linear triangulated category structure on $\D^{\mathrm{b}}(\X)$ together with the Frobenius pushforward endofunctor $\F_{\X*}\colon\D^{\mathrm{b}}(\X)\to\D^{\mathrm{b}}(\X)$.

\subsection{Acknowledgements}

The question of which Fourier--Mukai equivalences commute with Frobenius was asked of the author by Karl Schwede. The author received support from NSF grant \#1840190.

\section{Equivalences preserving supports}

Let $\X$ be a smooth variety over an algebraically closed field $k$ (of arbitrary characteristic). The \emph{support} of a coherent sheaf $\E$ on $\X$ is the closed subscheme of $\X$ cut out by the ideal sheaf $\I\subset\mathscr{O}_\X$ which is the kernel of the action map
\[
    \mathscr{O}_\X\to\sEnd_{\mathscr{O}_\X}(\E).
\]
Equivalently, the support of $\E$ is the minimal closed subscheme $\Z\subset \X$ such that $\E$ is the pushforward of a coherent sheaf on $\Z$. The \emph{support} of a complex $\E\in\D^{\mathrm{b}}(\X)$ is the minimal closed subscheme of $\X$ which contains the support of every cohomology sheaf of $\E$. If $\E$ is a coherent sheaf or an object of $\D^{\mathrm{b}}(\X)$, we define the \emph{reduced support} of $\E$ to be the reduced subscheme of $\X$ underlying the support.

We make the following definition.
\begin{definition}\label{def:point like}
    An object $\E\in\D^{\mathrm{b}}(\X)$ is \textit{point-like} if
    \begin{enumerate}
        \item $\Hom_{\D^{\mathrm{b}}(\X)}(\E,\E[i])=0$ for all $i<0$ and
        \item $\Hom_{\D^{\mathrm{b}}(\X)}(\E,\E)\simeq k$.
    \end{enumerate}
\end{definition}

If $x\in \X$ is a closed point then any shift $k(x)[n]$ is a point-like object of $\D^{\mathrm{b}}(\X)$. In general, there may be point-like objects with positive dimensional support. The following result shows however that every point-like object with zero dimensional support is of this form.

\begin{lemma}\label{lem:point like object and support}
    If $\E\in\D^{\mathrm{b}}(\X)$ is a point-like object with zero dimensional support, then $\E\simeq k(x)[n]$ for some closed point $x\in \X$ and integer $n$. 
\end{lemma}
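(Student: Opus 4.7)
The plan is to show $\E$ is concentrated in a single cohomological degree and there isomorphic to the residue field of a single closed point. The first step is to reduce to the case that $\E$ is supported at a single closed point. The underlying space of the support is a finite set $\{x_1, \ldots, x_r\}$ of closed points, and a standard argument (using that $\Hom_{\D^{\mathrm{b}}(\X)}(A, B) = 0$ whenever bounded complexes $A, B$ have disjoint set-theoretic supports) gives a direct sum decomposition $\E \simeq \bigoplus_j \E_j$ in $\D^{\mathrm{b}}(\X)$ with each $\E_j$ supported at $x_j$. Taking endomorphism rings yields $k \simeq \Hom_{\D^{\mathrm{b}}(\X)}(\E, \E) \simeq \bigoplus_j \Hom_{\D^{\mathrm{b}}(\X)}(\E_j, \E_j)$, and the identity on each nonzero $\E_j$ contributes an independent copy of $k$, so only one summand can be nonzero, forcing $r = 1$; call the remaining point $x$.

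Next I would show $\E$ is concentrated in a single cohomological degree. Let $a \leq b$ be the smallest and largest integers with $\mathcal{H}^i(\E) \neq 0$; after replacing $\E$ by $\E[a]$ we may assume $a = 0$, and the goal becomes $b = 0$. Both $\mathcal{H}^0(\E)$ and $\mathcal{H}^b(\E)$ are nonzero finite length $\mathscr{O}_{\X,x}$-modules, so each admits $k(x)$ as a composition factor, giving nonzero sheaf morphisms $\iota \colon k(x) \hookrightarrow \mathcal{H}^0(\E)$ and $\pi \colon \mathcal{H}^b(\E) \twoheadrightarrow k(x)$ and thus a nonzero composition $\gamma = \iota \circ \pi \colon \mathcal{H}^b(\E) \to \mathcal{H}^0(\E)$. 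Combining $\gamma$ with the canonical truncation morphisms $\mathcal{H}^0(\E) \to \E$ and $\E \to \mathcal{H}^b(\E)[-b]$, I form
\[
    \E \longrightarrow \mathcal{H}^b(\E)[-b] \xrightarrow{\gamma[-b]} \mathcal{H}^0(\E)[-b] \longrightarrow \E[-b].
\]
On cohomology in degree $b$ this reduces to $\gamma \colon \mathcal{H}^b(\E) \to \mathcal{H}^0(\E) = \mathcal{H}^b(\E[-b])$, which is nonzero, so the composition defines a nonzero element of $\Hom_{\D^{\mathrm{b}}(\X)}(\E, \E[-b])$. If $b > 0$, this contradicts condition (1) of Definition~\ref{def:point like}, and so $b = 0$.

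With $\E$ a single coherent sheaf supported at $x$ satisfying $\Hom_{\D^{\mathrm{b}}(\X)}(\E, \E) \simeq k$, the structural action map $\mathscr{O}_{\X,x} \to \Hom_{\D^{\mathrm{b}}(\X)}(\E, \E) \simeq k$ factors through the residue field, so $\E$ is annihilated by $\mathfrak{m}_x$. Hence $\E$ is a $k$-vector space of some dimension $r \geq 1$, and its endomorphism ring $\mathrm{Mat}_r(k)$ being $k$ forces $r = 1$ and $\E \simeq k(x)$. Restoring the shift yields $\E \simeq k(x)[-a]$, which is the claimed form with $n = -a$.

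I expect the middle step to be the main obstacle: one must verify that the composition built from the two truncation maps and $\gamma[-b]$ is genuinely nonzero in the derived category, not merely a formal construction that could vanish. The argument above handles this by computing the induced map on cohomology in the unique degree where both $\E$ and $\E[-b]$ have nonzero cohomology sheaves --- namely degree $b$ --- where the composition visibly equals the nonzero map $\gamma$. The other steps reduce to standard facts about complexes with zero-dimensional support.
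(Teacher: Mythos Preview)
Your argument is correct. The paper does not actually prove this lemma: it simply cites \cite[Lemma 4.5]{Huy06}. What you have written is essentially the standard proof one finds there --- decompose over the finitely many support points to reduce to a single point, use the truncation maps and a nonzero sheaf morphism between the top and bottom cohomology to produce a forbidden negative self-Ext unless the complex is concentrated in one degree, and then use the endomorphism condition to force the remaining sheaf to be $k(x)$. Your care in verifying nonvanishing of the composite by computing $\mathcal{H}^b$ is exactly the point that makes the middle step work, and the rest is routine.
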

\begin{proof}
     See \cite[Lemma 4.5]{Huy06}.
\end{proof}

Suppose now that $\X$ is smooth and projective. Let $\Y$ be another smooth projective variety over $k$ and let $\Phi\colon\D^{\mathrm{b}}(\X)\to\D^{\mathrm{b}}(\Y)$ be a Fourier--Mukai equivalence with kernel $\P\in\D^{\mathrm{b}}(\X\times \Y)$.
\begin{proposition}\label{prop:filtered equivalence}
  Suppose that, for every closed point $x\in \X$, the object $\Phi(k(x))\in \D^{\mathrm{b}}(\Y)$ has zero dimensional support. Then the support of $\P$ is the graph of an isomorphism $f\colon \X\isom \Y$, and there exists an integer $n$ and an invertible sheaf $\L$ on $\X$ such that
  \[
    \Phi(\rule{.25cm}{0.4pt})\simeq f_*(\L\otimes\rule{.25cm}{0.4pt})[n].
  \]
\end{proposition}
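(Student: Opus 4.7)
The plan is to extract the global structure of the Fourier--Mukai kernel $\P$ on $\X \times \Y$ from the pointwise behavior of $\Phi$ on skyscraper sheaves, proceeding in three stages: identify $\Phi(k(x))$ up to shift, show that the shift is constant in $x$, and then read off the graph morphism and twisting line bundle.

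Since $\Phi$ is a $k$-linear equivalence of triangulated categories, it preserves the Hom-characterization of point-like objects from Definition \ref{def:point like}. Thus for every closed point $x \in \X$ the object $\Phi(k(x))$ is point-like and, by hypothesis, has zero-dimensional support, so Lemma \ref{lem:point like object and support} yields
\[
    \Phi(k(x)) \simeq k(f(x))[n_x]
\]
for some closed point $f(x) \in \Y$ and integer $n_x$. The resulting set map $f\colon \X(k) \to \Y(k)$ is injective: if $x \neq y$ then $\Hom_{\D^{\mathrm{b}}(\X)}(k(x), k(y)[i]) = 0$ for every $i$, whereas $f(x)=f(y)$ would produce a nonzero Hom on the $\Y$-side at $i = n_x - n_y$.

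The technical heart is showing that $n_x$ is constant. Writing $i_x\colon \{x\} \times \Y \hookrightarrow \X \times \Y$ for the fiber inclusion, the isomorphism above reads $Li_x^*\P \simeq k(f(x))[n_x]$, and $\P$ is perfect since $\X \times \Y$ is smooth. Fix $x_0 \in \X$. Upper semicontinuity of the dimensions of the cohomology sheaves of derived fibers of a perfect complex implies that for $x$ in a neighborhood of $x_0$ one has $\mathcal{H}^i(Li_x^*\P) = 0$ for every $i \neq -n_{x_0}$. Because $Li_x^*\P$ is nonzero, its surviving cohomology must live in degree $-n_{x_0}$, forcing $n_x = n_{x_0}$. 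Hence $x \mapsto n_x$ is locally constant on the connected variety $\X$, and therefore constant. After shifting $\Phi$ we may assume this common value is $0$, so each derived fiber of $\P$ is a length-one coherent sheaf concentrated in degree zero; standard cohomology-and-base-change then forces $\P$ itself to be a coherent sheaf on $\X \times \Y$ that is flat over $\X$.

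The remaining steps are routine scheme theory. The scheme-theoretic support $\Z \subset \X \times \Y$ of $\P$ projects finitely to $\X$ with reduced length-one fibers, so $p_\X|_\Z\colon \Z \isom \X$ is an isomorphism, exhibiting $\Z$ as the graph of a morphism $f\colon \X \to \Y$ that recovers the set map above. Then $\P$ is the graph pushforward of a line bundle $\L$ on $\X$, so $\Phi \simeq f_*(\L \otimes \rule{.25cm}{0.4pt})[n]$. Since this is an equivalence and $f$ is proper, $f$ must be an isomorphism. I expect the main obstacle to be the semicontinuity argument for constancy of $n_x$; once that and the coherence of $\P$ are in hand, the identification of the kernel with a line bundle on a graph is a standard manipulation.
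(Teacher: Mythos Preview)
Your argument follows exactly the paper's route: Lemma~\ref{lem:point like object and support} gives $\Phi(k(x))\simeq k(f(x))[n_x]$, semicontinuity forces $n_x$ to be locally constant and hence constant, and then one reads off the graph-plus-line-bundle description of the kernel. The only difference is packaging: the paper cites \cite[Corollary~6.14]{Huy06} for the constancy of the shift and \cite[Corollary~5.23]{Huy06} for the identification of the kernel, while you sketch the content of those results inline.
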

\begin{proof}
    Consider a closed point $x\in \X$. Then $k(x)$ is point-like, so $\Phi(k(x))$ is as well. By assumption, $\Phi(k(x))$ also has zero dimensional support, so by Lemma \ref{lem:point like object and support} we have $\Phi(k(x))\simeq k(y)[n]$ for some closed point $y\in \Y$ and integer $n$. By semicontinuity the integer $n$ must be independent of the point $x$ \cite[Corollary 6.14]{Huy06}. We conclude by applying \cite[Corollary 5.23]{Huy06} to the functor $[-n]\circ\Phi$.
\end{proof}

\section{Characterizing points using Frobenius}

Let $\X$ be a smooth variety over an algebraically closed field $k$ of characteristic $p>0$. Let $\Z\subset \X$ be a reduced and irreducible closed subscheme with generic point $\eta_\Z$. We say that a coherent sheaf $\E$ on $\X$ is \emph{properly supported on }$\Z$ if either $\Z$ is an irreducible component of the reduced support of $\E$ or $\E$ vanishes generically on $\Z$. Equivalently, $\E$ is properly supported on $\Z$ if the restriction $\E\otimes_{\mathscr{O}_\X}\mathscr{O}_{\X,\eta_\Z}$ of $\E$ to the local scheme $\Spec \mathscr{O}_{\X,\eta_\Z}$ has finite length. If $\E$ is a coherent sheaf on $\X$ which is properly supported on $\Z$, we define
\[
    \rk_\Z(\E):=\len_{\mathscr{O}_{\X,\eta_\Z}}(\E\otimes_{\mathscr{O}_\X}\mathscr{O}_{\X,\eta_\Z}).
\]
The key property of this quantity that we will use is that it is additive in short exact sequences of coherent sheaves properly supported on $\Z$.

\begin{lemma}\label{lem:supports of pushforwards}
    If $\Z\subset \X$ is a reduced and irreducible closed subvariety of dimension $d$ and $\E$ is a coherent sheaf on $\X$ which is properly supported on $\Z$, then we have
    \[
        \rk_\Z(\F_{\X*}\E)=p^d\rk_\Z(\E).
    \]

\end{lemma}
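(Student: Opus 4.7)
The plan is to reduce the statement to a purely local computation at $\eta_\Z$. Set $A := \mathscr{O}_{\X,\eta_\Z}$ and let $F\colon A\to A$ be the map $a\mapsto a^p$. Since $\F_\X$ is the identity on underlying topological spaces, taking the stalk at $\eta_\Z$ identifies $(\F_{\X*}\E)_{\eta_\Z}$ with $\E_{\eta_\Z}$ as an abelian group, with the $A$-module structure twisted by $F$. Writing $M := \E_{\eta_\Z}$ and $F_*M$ for this twisted module, the statement becomes the purely algebraic claim $\len_A(F_*M) = p^d \len_A(M)$.

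Restriction of scalars along $F$ is exact, so it preserves short exact sequences and sends any composition series of the finite-length $A$-module $M$ to a filtration of $F_*M$ whose $\len_A(M)$ successive quotients are each isomorphic to $F_*(A/\mathfrak{m}_A) = F_* k(\Z)$. By additivity of length, the computation reduces to showing $\len_A(F_*k(\Z)) = p^d$.

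For this, observe that $\mathfrak{m}_A$ acts on $F_*k(\Z)$ via its $p$-th powers, which lie in $\mathfrak{m}_A$ and so annihilate $k(\Z)$; hence $F_*k(\Z)$ is naturally a $k(\Z)$-vector space, and its length as an $A$-module coincides with its dimension over $k(\Z)$. That dimension equals $[k(\Z):k(\Z)^p]$, since the $k(\Z)$-action on $F_*k(\Z)$ factors through Frobenius and identifies $k(\Z)$ with the subfield $k(\Z)^p$. Finally, $k$ is algebraically closed and hence perfect, and $k(\Z)$ is a finitely generated field extension of $k$ of transcendence degree $d = \dim \Z$, so the classical identity $[K:K^p] = p^{\operatorname{trdeg}_k K}$ for finitely generated extensions of a perfect field yields $[k(\Z):k(\Z)^p] = p^d$.

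The only nontrivial input is this last degree formula, and it is the step I would dwell on. It can be proved by picking a separating transcendence basis $x_1,\dots,x_d$ of $k(\Z)/k$ (which exists because $k$ is perfect), checking by direct induction that $[k(x_1,\dots,x_d):k(x_1^p,\dots,x_d^p)] = p^d$, and noting that the finite separable extension $k(\Z)/k(x_1,\dots,x_d)$ does not change the index because separability forces $k(\Z) = k(\Z)^p \cdot k(x_1,\dots,x_d)$. Everything else in the argument is formal bookkeeping with length and restriction of scalars.
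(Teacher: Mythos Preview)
Your argument is correct and is genuinely different from the paper's. The paper proceeds geometrically in three stages: first the case $\Z=\X$ (shrink so that $\E$ is free and invoke the cited fact that Frobenius on a smooth $d$-fold is locally free of degree $p^d$), then the case where the support of $\E$ lies in $\Z$ (shrink so $\Z$ is smooth and reduce to the first case via $\F_{\X*}\circ i_* = i_*\circ \F_{\Z*}$), and finally the general case by an induction on the $\mathscr{I}$-adic filtration of $\E$. You instead localize at $\eta_\Z$ from the outset, turning the statement into pure commutative algebra: d\'evissage along a composition series reduces to computing $\len_A(F_*k(\Z))$, which you identify with $[k(\Z):k(\Z)^p]=p^d$. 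Your route is shorter and avoids the need to shrink to loci where $\Z$ is smooth or $\E$ is free; it also makes transparent that smoothness of $\X$ plays no role in this lemma (only the function-field degree formula matters). The paper's approach, on the other hand, keeps everything phrased in terms of sheaves and cites a standard geometric fact rather than the $p$-degree formula, which some readers may find more familiar. Both ultimately rest on the same arithmetic input, just packaged differently.
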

\begin{proof}
    Let $\E$ be a coherent sheaf properly supported on $\Z$. Let $\Z'$ be the support of $\E$. We first consider the special case when $\Z=\X$. By passing to an open subscheme, we may assume that $\E$ is free, and then further reduce to the case when $\E=\mathscr{O}_\X$. The result then follows from the fact that the absolute Frobenius of a smooth $k$-variety of dimension $d$ is finite and locally free of degree $p^d$ \cite[Proposition 3.2]{MR1409820}.

    Next we consider the special case when $\Z'\subset \Z$. By passing to an open subscheme we may assume that $\Z$ is smooth. Let $i\colon \Z\hookrightarrow \X$ be the inclusion. We have a commutative diagram
    \[
        \begin{tikzcd}
            \Z\arrow[hook]{d}[swap]{i}\arrow{r}{\F_\Z}&Z\arrow[hook]{d}{i}\\
            \X\arrow{r}{\F_\X}&\X.
        \end{tikzcd}
    \]
    Write $\E_\Z=i^*\E$ for the (non-derived) pullback of $\E$ to $\Z$. As $\Z'\subset \Z$, we have that $\E=i_*\E_\Z$, and therefore
    \[
        \F_{\X*}\E=\F_{\X*}(i_*\E_\Z)=i_*(\F_{\Z*}\E_\Z).
    \]
    From the previous case applied to $\E_\Z$ we obtain the equality $\rk_\Z(\F_{\Z*}\E_\Z)=p^d\rk_\Z(\E_\Z)$, which implies that 
    \[
        \rk_\Z(i_*(\F_{\Z*}\E_\Z))=p^d\rk_\Z(i_*\E_\Z).
    \]
    Combining these we obtain that
    \[
        \rk_\Z(\F_{\X*}\E)=\rk_\Z(i_*(\F_{\Z*}\E_\Z))=p^d\rk_\Z(i_*\E_\Z)=p^d\rk_\Z(\E)
    \]
    as claimed.
    
    Finally we consider the general case. Let $\I$ be the ideal sheaf of $\Z$. If $\E$ is properly supported on $\Z$, then for all sufficiently large integers $m$ we have that $\rk_\Z(\I^m\E)=0$ (equivalently, $\I^m\E$ vanishes generically on $\Z$). We induct on $m$ the claimed statement for those coherent sheaves $\E$ which are properly supported on $\Z$ and satisfy $\rk_\Z(\I^m\E)=0$. For the base case, suppose that $\E$ is properly supported on $\Z$ and satisfies $\rk_\Z(\I\,\E)=0$. Then after passing to an open subscheme we may ensure that $\I\,\E=0$, in which case the support of $\E$ is contained in $\Z$, and so we are in the situation already dealt with above. For the induction step, suppose that $\E$ is properly supported on $\Z$ and that $\rk_\Z(\I^{m+1}\E)=0$. Consider the short exact sequence
    \[
        0\to \I\,\E\to \E\to \E/\I\,\E\to 0.
    \]
    The left and right terms are again properly supported on $\Z$, and moreover each of their products with $\I^m$ has $\rk_\Z=0$. Applying $\F_{\X*}$ and using the additivity of $\rk_\Z$ and the induction hypothesis, we get
    \begin{align*}
        \rk_\Z\left(\F_{\X*}\E\right)&=\rk_\Z\left(\F_{\X*}\I\,\E\right)+\rk_\Z\left(\F_{\X*}(\E/\I\,\E)\right)\\
        &=p^d\rk_\Z(\I\,\E)+p^d\rk_\Z\left(\E/\I\,\E)\right)\\
        &=p^d\rk_\Z(\E).
    \end{align*}
\end{proof}

\begin{lemma}\label{lem:equal to your own pushforward}
    If $\E\in\D^{\mathrm{b}}(\X)$ is a nonzero object such that $\E\simeq\F_{\X*}\E$, then $\E$ has zero dimensional support.
\end{lemma}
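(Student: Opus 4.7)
The plan is to reduce the statement about the complex $\E$ to a statement about its cohomology sheaves, then invoke Lemma \ref{lem:supports of pushforwards} on a top-dimensional component of the support to force a numerical contradiction. The key observation is that $\F_{\X*}$ is exact (as noted in the introduction), so an isomorphism $\E \simeq \F_{\X*}\E$ in $\D^{\mathrm{b}}(\X)$ descends to isomorphisms $\H^i(\E) \simeq \F_{\X*}\H^i(\E)$ on each cohomology sheaf.

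Suppose for contradiction that the reduced support of $\E$ has positive dimension, and let $d$ be the maximum dimension of an irreducible component. Choose $\Z \subset \X$ to be an irreducible component of the reduced support of $\E$ with $\dim \Z = d$. Because every cohomology sheaf $\H^i(\E)$ has support contained in the support of $\E$, and no irreducible component of the support of $\E$ has dimension exceeding $d$, each $\H^i(\E)$ is properly supported on $\Z$ in the sense of the previous subsection. I can therefore apply Lemma \ref{lem:supports of pushforwards} to each $\H^i(\E)$ to obtain
\[
    \rk_\Z(\H^i(\E)) = \rk_\Z(\F_{\X*}\H^i(\E)) = p^d \rk_\Z(\H^i(\E))
\]
for every $i$. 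Since $d \geq 1$ we have $p^d \neq 1$, and so $\rk_\Z(\H^i(\E)) = 0$ for all $i$.

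This will be a contradiction because the reduced support of $\E$ is, by definition, the union of the reduced supports of the cohomology sheaves $\H^i(\E)$, so the irreducible component $\Z$ must appear as an irreducible component of the support of at least one $\H^i(\E)$, forcing $\rk_\Z(\H^i(\E)) > 0$ for some $i$. The only subtle point, and the spot to be careful about, is the bookkeeping in Step~1: I should verify cleanly that an isomorphism in $\D^{\mathrm{b}}(\X)$ between $\E$ and $\F_{\X*}\E$ induces honest (not merely virtual) isomorphisms on each cohomology sheaf, which follows immediately from exactness of $\F_{\X*}$ together with the fact that isomorphisms in the derived category induce isomorphisms on cohomology. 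Beyond this, the argument is essentially a direct numerical consequence of Lemma \ref{lem:supports of pushforwards}.
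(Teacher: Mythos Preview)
Your proposal is correct and follows essentially the same approach as the paper: reduce to cohomology sheaves via exactness of $\F_{\X*}$, then apply Lemma~\ref{lem:supports of pushforwards} to force the dimension of any component of the support to be zero. The only organizational difference is that the paper first reduces to a single nonzero cohomology sheaf and then picks an arbitrary component of its support, whereas you work with all cohomology sheaves simultaneously and pick a maximal-dimensional component of the support of the complex; the ``maximal dimension'' hypothesis is in fact unnecessary, since any irreducible component $\Z$ of the support of $\E$ is automatically either a component of, or disjoint from, the support of each $\H^i(\E)$.
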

\begin{proof}
    If $\E\simeq\F_{\X*}\E$, then the cohomology sheaves of $\E$ satisfy $\mathscr{H}^i(\E)\simeq\mathscr{H}^i(\F_{\X*}\E)=\F_{\X*}\mathscr{H}^i(\E)$ for each integer $i$. We therefore reduce to the case when $\E$ is a nonzero coherent sheaf. Let $\Z$ be the reduction of an irreducible component of the support of $\E$. Then $\E$ is properly supported on $\Z$ and $\rk_\Z(\E)\neq 0$. By Lemma \ref{lem:supports of pushforwards} we have
    \[
        \rk_\Z(\E)=\rk_\Z(\F_{\X*}\E)=p^d\rk_\Z(\E)
    \]
    where $d$ is the dimension of $\Z$. It follows that $d=0$.
\end{proof}

Combining the above results we obtain the following characterization of the shifts of structure sheaves of points in $\D^{\mathrm{b}}(\X)$.

\begin{proposition}\label{prop:picking out points}
  Let $\E\in\D^{\mathrm{b}}(\X)$ be an object. The following are equivalent.
  \begin{enumerate}
      \item $\E\simeq k(x)[n]$ for some closed point $x\in \X$ and integer $n$.
      \item $\E$ is point-like and $\E\simeq\F_{\X*}\E$.
  \end{enumerate}
\end{proposition}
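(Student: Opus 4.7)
\medskip

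\noindent\textbf{Proof proposal.} The plan is to prove the two implications separately, with both directions reducing to assembling previously established pieces.

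For $(1)\Rightarrow(2)$, I would first note that $k(x)[n]$ is point-like: its endomorphisms in $\D^{\mathrm{b}}(\X)$ compute $\mathrm{Ext}^\bullet_{\mathscr{O}_\X}(k(x),k(x))$, which in nonnegative degrees is the Koszul complex giving $\bigwedge^\bullet T_x\X$, so degree $0$ gives $k$ and negative degrees vanish. Then I would check $\F_{\X*}k(x)\simeq k(x)$: the absolute Frobenius is the identity on the underlying topological space, so $\F_{\X*}k(x)$ is a skyscraper sheaf at $x$ whose stalk is $k(x)=k$ regarded as an $\mathscr{O}_{\X,x}$-module via the $p$th power map, and because $k$ is algebraically closed (hence perfect) this still produces a one-dimensional residue field module, so the result is again isomorphic to $k(x)$.

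For $(2)\Rightarrow(1)$, the two hypotheses slot directly into the two lemmas already proved: from $\E\simeq \F_{\X*}\E$ together with $\E\ne 0$ (which is forced by $\mathrm{Hom}(\E,\E)\simeq k$), Lemma~\ref{lem:equal to your own pushforward} gives that $\E$ has zero dimensional support, and then Lemma~\ref{lem:point like object and support} applied to the point-like object $\E$ concludes $\E\simeq k(x)[n]$.

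There is no genuine obstacle here; the content of the proposition is really the combination of Lemmas \ref{lem:supports of pushforwards}, \ref{lem:equal to your own pushforward}, and \ref{lem:point like object and support}. The only point that deserves care is the verification that $\F_{\X*}k(x)\simeq k(x)$ in direction $(1)\Rightarrow(2)$, where one must use that $k$ is perfect (without perfection, the pushforward would have length $[k:k^p]$ rather than $1$ at $x$).
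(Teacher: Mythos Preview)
Your proposal is correct and follows the same route as the paper: the implication $(2)\Rightarrow(1)$ is exactly the combination of Lemma~\ref{lem:equal to your own pushforward} and Lemma~\ref{lem:point like object and support} that the paper invokes, and for $(1)\Rightarrow(2)$ the paper simply declares it ``immediate'' while you spell out the verification of $\F_{\X*}k(x)\simeq k(x)$ and the point-like condition. Your added remark that perfection of $k$ is what makes $\F_{\X*}k(x)$ have length one is a nice clarification beyond what the paper records.
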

\begin{proof}
    (1) implies (2) is immediate. Conversely, if $\E\simeq\F_{\X*}\E$, then $\E$ has zero dimensional support by Lemma \ref{lem:equal to your own pushforward}. Thus, if $\E$ is also point-like, then by Lemma \ref{lem:point like object and support} we have $\E\simeq k(x)[n]$ for some closed point $x\in \X$ and integer $n$.
\end{proof}

\section{Proof of Theorem \ref{thm:main theorem}}

We recall the notation: $\X$ and $\Y$ are smooth projective varieties over an algebraically closed field $k$ of characteristic $p>0$ and $\Phi\colon\D^{\mathrm{b}}(\X)\to\D^{\mathrm{b}}(\Y)$ is a Fourier--Mukai equivalence.

\begin{lemma}\label{lem:one implies the other}
  The equivalence $\Phi$ commutes with $\F^*$ if and only if it commutes with $\F_*$.
\end{lemma}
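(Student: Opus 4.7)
The plan is to deduce the lemma purely formally from properties of adjoint functors. The two ingredients I will use are: (i) since $\F_\X$ is finite and flat, the functor $\F_\X^*$ is exact and is left adjoint to $\F_{\X*}$ as endofunctors of $\D^{\mathrm{b}}(\X)$ (and likewise for $\Y$); and (ii) since $\Phi$ is a Fourier--Mukai equivalence, its quasi-inverse $\Psi=\Phi^{-1}$ is simultaneously a left and a right adjoint to $\Phi$.

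Suppose first that $\Phi$ commutes with $\F_*$, so that there is a natural isomorphism $\Phi\circ\F_{\X*}\cong\F_{\Y*}\circ\Phi$ of functors $\D^{\mathrm{b}}(\X)\to\D^{\mathrm{b}}(\Y)$. Taking left adjoints of both sides---using that the left adjoint of a composition is the composition of the left adjoints in reverse order---yields a natural isomorphism
\[
  \F_\X^*\circ\Psi\cong\Psi\circ\F_\Y^*.
\]
Composing with $\Phi$ on the left and on the right and invoking $\Phi\Psi\cong\mathrm{id}$ and $\Psi\Phi\cong\mathrm{id}$, I obtain $\Phi\circ\F_\X^*\cong\F_\Y^*\circ\Phi$, so $\Phi$ commutes with $\F^*$. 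The converse direction is completely symmetric: starting from $\Phi\circ\F_\X^*\cong\F_\Y^*\circ\Phi$, I take right adjoints (noting that $\F_*$ is the right adjoint of $\F^*$) and then conjugate by $\Phi$ to produce the commutativity with $\F_*$.

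The one point that may deserve a remark is the $k$-semi-linearity of $\F_*$ and $\F^*$ emphasized in the introduction. This does not interfere with the argument, because adjunction is purely an additive (abelian-group-level) notion, and all of the manipulations above take place at that level. I do not anticipate any substantive obstacle here; the entire argument is formal manipulation of adjoints.
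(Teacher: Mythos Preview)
Your argument is correct and is essentially the same as the paper's: both use the adjunction $\F^*\dashv\F_*$ together with the fact that $\Phi$ is an equivalence (so its quasi-inverse is a two-sided adjoint), and then conclude by a Yoneda-type argument. The only difference is cosmetic---you invoke the packaged statement that taking left/right adjoints reverses a natural isomorphism of composites, while the paper writes out the corresponding chain of $\Hom$-set isomorphisms explicitly and appeals to Yoneda's Lemma.
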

\begin{proof}
    We have the adjunction
    \[
        \Hom_{\D^{\mathrm{b}}(\X)}(\F_{\X}^*\E,\G)\simeq\Hom_{\D^{\mathrm{b}}(\X)}(\E,\F_{\X*}\G)
    \]
    for objects $\E,\G\in\D^{\mathrm{b}}(\X)$. Because $\Phi$ is an equivalence, this gives rise to isomorphisms
    \[
        \Hom_{\D^{\mathrm{b}}(\Y)}(\Phi(\F_{\X}^*\E),\Phi(\G))\simeq\Hom_{\D^{\mathrm{b}}(\Y)}(\Phi(\E),\Phi(\F_{\X*}\G))
    \]
    which are functorial in $\E$ and $\G$. Suppose that $\F_{\Y}^*\circ\Phi\simeq\Phi\circ \F_{\X}^*$. Then we obtain functorial isomorphisms
    \begin{align*}
        \Hom_{\D^{\mathrm{b}}(\Y)}(\Phi(\E),\F_{\Y*}\Phi(\G))&\simeq\Hom_{\D^{\mathrm{b}}(\Y)}(\F_{\Y}^*\Phi(\E),\Phi(\G))\\
        &\simeq\Hom_{\D^{\mathrm{b}}(\Y)}(\Phi(\F_\X^*\E),\Phi(\G))\\
        &\simeq\Hom_{\D^{\mathrm{b}}(\Y)}(\Phi(\E),\Phi(\F_{\X*}\G)).
    \end{align*}
    As $\Phi$ is an equivalence, Yoneda's Lemma implies that $\F_{\Y*}\circ\Phi\simeq\Phi\circ\F_{\X*}$. The reverse implication is similar.
\end{proof}

\begin{proof}[Proof of Theorem \ref{thm:main theorem}]
By Lemma \ref{lem:one implies the other} it suffices to consider the case when $\Phi$ commutes with $\F_*$. For a closed point $x\in \X$, we have that $k(x)$ and hence $\Phi(k(x))$ are point-like, and furthermore
\[
    \Phi(k(x))\simeq\Phi(\F_{\X*}k(x))\simeq\F_{\Y*}\Phi(k(x)).
\]
Proposition \ref{prop:picking out points} implies that $\Phi(k(x))\simeq k(y)[n]$ for some closed point $y\in \Y$ and some integer $n$, and in particular $\Phi(k(x))$ has zero dimensional support. By Proposition \ref{prop:filtered equivalence} there exists an isomorphism $f\colon \X\isom \Y$, an integer $n$, and an invertible sheaf $\L$ on $\X$ such that
\[
    \Phi(\rule{.25cm}{0.4pt})\simeq f_*(\L\otimes\rule{.25cm}{0.4pt})[n].
\]
It remains to show that $\L^{\otimes p-1}\simeq\mathscr{O}_\X$. To see this, we note that shifts and pushforwards along isomorphisms always commute with both $\F^*$ and $\F_*$. It follows that tensoring with $\L$ commutes with $\F_*$, hence also with $\F^*$, and therefore we have
\[
    \L\otimes\F^*_\X\E\simeq\F^*_\X(\L\otimes \E)
\]
for every $\E\in\D^{\mathrm{b}}(\X)$. Taking $\E=\mathscr{O}_\X$ we conclude that $\L\simeq\L^{\otimes p}$, and thus $\mathscr{O}_{\X}\simeq\L^{\otimes p-1}$.
\end{proof}

\bibliographystyle{plain}
\bibliography{biblio}

\begin{thebibliography}{1}

\bibitem{Huy06}
Daniel Huybrechts.
\newblock {\em Fourier-{M}ukai transforms in algebraic geometry}.
\newblock Oxford Mathematical Monographs. The Clarendon Press, Oxford
  University Press, Oxford, 2006.

\bibitem{MR1409820}
Luc Illusie.
\newblock Frobenius et d\'{e}g\'{e}n\'{e}rescence de {H}odge.
\newblock In {\em Introduction \`a la th\'{e}orie de {H}odge}, volume~3 of {\em
  Panor. Synth\`eses}, pages 113--168. Soc. Math. France, Paris, 1996.

\end{thebibliography}

\end{document}